\newtheorem{theorem}{Theorem}[section] %
\newtheorem{corollary}[theorem]{Corollary} %
\newtheorem{lemma}[theorem]{Lemma} %
\newtheorem*{method}{Computational method} %
\newtheorem{proposition}[theorem]{Proposition} %
{\theoremstyle{remark} %
  \newtheorem{remark}[theorem]{Remark}} %
{\theoremstyle{definition} %
  \newtheorem{definition}[theorem]{Definition} %
}
\newcommand{\PP}[0]{\ensuremath{\mathbb{P}}}
\newcommand{\CC}[0]{\ensuremath{\mathbb{C}}}
\newcommand{\ZZ}[0]{\ensuremath{\mathbb{Z}}}
\newcommand{\AAA}[0]{\ensuremath{\mathcal{A}}}
\newcommand{\fp}[0]{\ensuremath{\mathbb{F}}}
\newcommand{\FF}[0]{\ensuremath{\mathcal{F}}}
\newcommand{\TT}[0]{\ensuremath{\mathcal{T}}}
\newcommand{\EE}[0]{\ensuremath{\mathcal{E}}}
\newcommand{\JJ}[0]{\ensuremath{\mathcal{J}}}
\newcommand{\aut}[0]{\ensuremath{\operatorname{Aut}}}
\newcommand{\lin}[0]{\ensuremath{\operatorname{Lin}}}
\newcommand{\diag}[0]{\ensuremath{\operatorname{diag}}}
\newcommand{\PGL}[0]{\ensuremath{\operatorname{PGL}}}
\newcommand{\GL}[0]{\ensuremath{\operatorname{GL}}}
\newcommand{\PSL}[0]{\ensuremath{\operatorname{PSL}}}
\begin{document}

\title[]{Automorphisms of prime order of smooth cubic $n$-folds}

\author{V\'\i ctor Gonz\'alez-Aguilera and Alvaro Liendo}
\address{Departamento de Matem\'aticas, Universidad T\'ecnica
  Fe\-de\-ri\-co San\-ta Ma\-r\'\i a, Ca\-si\-lla 110-V, Valpara\'\i
  so, Chile.}
\email{vgonzale@mat.utfsm.cl}

\address{Mathematisches Institut, Universit\"at Basel, Rheinsprung 21,
  CH-4051 Basel, Switzerland.}
\email{alvaro.liendo@unibas.ch}

\date{\today}

\thanks{{\it 2000 Mathematics Subject
    Classification}: Primary 14J40; Secondary 14J30.\\
  \mbox{\hspace{11pt}}{\it Key words}: Cubic $n$-folds, automorphisms
  of hypersurfaces, principally polarized abelian varieties.\\
  \mbox{\hspace{11pt}}The first author was partially supported by the
  Fondecyt project 1080030 and the Dgip of the UTFSM}

\begin{abstract}
  In this paper we give an effective criterion as to when a prime
  number $p$ is the order of an automorphism of a smooth cubic
  hypersurface of $\PP^{n+1}$, for a fixed $n\geq 2$. We also provide
  a computational method to classify all such hypersurfaces that admit
  an automorphism of prime order $p$. In particular, we show that
  $p<2^{n+1}$ and that any such hypersurface admitting an automorphism
  of order $p>2^n$ is isomorphic to the Klein $n$-fold. We apply our
  method to compute exhaustive lists of automorphism of prime order of
  smooth cubic threefolds and fourfolds. Finally, we provide an
  application to the moduli space of principally polarized abelian
  varieties.
\end{abstract}

\maketitle

\section*{Introduction}

The smooth cubic hypersurfaces of the projective space $\PP^{n+1}$ (or
cubic $n$-fold for short), $n\geq 2$ are classical objects in
algebraic geometry. Its groups of regular automorphisms are finite and
induced by linear automorphisms of $\PP^{n+1}$ \cite{mat}. In the case
$n=2$ they correspond to the classical cubic surfaces. Segre in $1942$
used the geometry of its $27$ lines produced a list of cubic surfaces
admitting non-trivial automorphism group \cite{segre}, see also
\cite{dolga}.



Let $n\geq 2$ be a fixed integer. The main result of this paper is the
following criterion for the order of an automorphism of a cubic
$n$-fold: a prime number $p$ is the order of an automorphism of a
smooth cubic hypersurface of $\PP^{n+1}$ if an only if $p=2$ or there
exists $\ell\in\{1,\ldots,n+2\}$ such that $(-2)^\ell\equiv 1\mod
p$. See Theorem~\ref{resultado} in Section~\ref{main}.

In Corollary 1.8 we apply this criterion to show that if a prime
number $p$ is the order of an automorphism of a cubic $n$-fold, then
$p<2^{n+1}$. This bound is sharper that the general bound in
\cite{szabo} specialized to cubic hypersurfaces. Moreover, we show
that any $n$-fold admitting an automorphism of prime order $p>2^n$ is
isomorphic to the Klein $n$-fold given by the cubic form
$F=x_0^2x_1+x_1^2x_2+\ldots+x_n^2x_{n+1}+x_{n+1}^2x_0$.

In Section~\ref{class} we develop a computational method to classify
all the smooth cubic $n$-folds admitting an automorphism of order $p$
prime. We illustrate our method in the cases of threefolds and
fourfolds, see Theorems~\ref{tthre} and \ref{tfour}.


Finally, in Section~\ref{abvar} we apply a theorem in \cite{victor} to
show that the intermediate jacobian of the Klein $5$-fold is a
zero-dimensional non-isolated component of the singular locus of the
moduli space of principally polarized abelian varieties of dimension
21. The analog result for the Klein threefold was proved in
\cite{victor}.

\section{Admissible prime orders of automorphisms of a cubic $n$-fold}
\label{main}

In this section we give a criterion as to when a prime number $p$
appear as the order of an automorphism of a smooth cubic $n$-fold in
$\PP^{n+1}$, $n\geq 2$. We also give a simple bound for the order of
any such automorphism.

Letting $V$ be a vector space over $\CC$ of dimension $n+2$, $n\geq2$
with a fixed basis $\{x_0,\ldots,x_{n+1}\}$, let $\PP^{n+1}=\PP(V)$ be
the corresponding projective space, and $S^3(V)$ be the vector space
of cubic forms on $V$. We fix the basis $\{x_ix_jx_k: 0\leq i\leq j
\leq k \leq n+1\}$ of $S^3(V)$. The dimension of $S^3(V)$ is
$\binom{n+4}{3}$.

For a cubic form $F\in S^3(V)$, we denote by
$X=V(F)\subseteq\PP^{n+1}$ the cubic hypersurface of dimension $n$
(cubic $n$-fold for short) associated to $F$. We denote by $\aut(X)$
the group of regular automorphisms of $X$ and by $\lin(X)$ the
subgroup of $\aut(X)$ that extends to automorphisms of $\PP^{n+1}$. By
Theorems 1 and 2 in \cite{mat} if $X$ is smooth, then
$$\aut(X)=\lin(X) \quad\mbox{and}\quad |\aut(X)|<\infty.$$

In this setting $\aut(X)<\PGL(V)$ and for any automorphism in
$\aut(X)$ we can choose a representative in $\GL(V)$. This
automorphism induces an automorphism of $S^3(V)$ such that
$\varphi(F)=\lambda F$. These three automorphisms will be denoted by
the same letter $\varphi$.

In this paper we consider automorphisms of order $p$ prime. In this
case, multiplying by an appropriate constant, we can assume that
$\varphi^p=\operatorname{Id}_V$, so that $\varphi$ is also a linear
automorphism of finite order $p$ of $V$ and $\varphi(F)=\xi^{a} F$
where $\xi$ is a $p$-th root of the unity. Furthermore, we can apply a
linear change of coordinates on $V$ to diagonalize $\varphi$, so that
$$\varphi:V\rightarrow V,\qquad (x_0,\ldots,x_{n+1})\mapsto(\xi^{\sigma_0}x_0,\ldots,\xi^{\sigma_{n+1}}x_{n+1}), \qquad 0\leq\sigma_i<p.$$
\begin{definition}
  We define the signature $\sigma$ of an automorphism $\varphi$ as
  above by
  $$\sigma=(\sigma_{0},\ldots,\sigma_{n+1})\in \fp_p^{n+2}\,,$$ where we
  identify $\sigma_i$ with its class in the field $\fp_p$. We also
  denote $\varphi=\diag(\sigma)$ and we say that $\varphi$ is a
  diagonal automorphism.
\end{definition}

\begin{remark} \label{identif} Let $\sigma$ be the signature of an
  automorphism $\varphi$. The signature of $\varphi^a$ is
  $a\cdot\sigma$. Changing the representative of $\varphi$ in $\GL(V)$
  by $\xi^a\varphi$, corresponds to change the signature by
  $\sigma+a\cdot\mathbbm{1}$, where
  $\mathbbm{1}=(1,\ldots,1)$. Finally, the natural action of the
  symmetric group $S_{n+2}$ by permutation of the basis of
  $V$ corresponds to permutation of the signature $\sigma$.
\end{remark}

The following simple lemma is a key ingredient in our classification
of automorphisms of prime order of smooth cubic $n$-folds in
Section~\ref{class}.

\begin{lemma} \label{base} Let $X$ be a cubic hypersurface of
  ${\mathbb P}^{n+1}$, given by the homogeneous form $F\in S^3(V)$. If
  the degree of $F$ is smaller than $2$ in some of the variables
  $x_i$, then $X$ is singular.
\end{lemma}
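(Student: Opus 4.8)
The plan is to show that if $F$ has degree at most $1$ in some variable---say, after relabeling, in $x_{n+1}$---then $X$ has a singular point on the coordinate subspace $\{x_{n+1}=0\}$, in fact at one of the coordinate points. Write $F = x_{n+1} G + H$, where $G \in S^2(V')$ and $H \in S^3(V')$ with $V'$ the span of $x_0, \ldots, x_n$; the hypothesis says $F$ has no $x_{n+1}^2$ or $x_{n+1}^3$ term, which is exactly this shape. A point $P = [a_0 : \cdots : a_n : 0]$ with $a_{n+1} = 0$ lies on $X$ iff $H(a_0, \ldots, a_n) = 0$, and $P$ is singular on $X$ iff in addition all partials vanish there. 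The partial $\partial F / \partial x_{n+1}$ equals $G$, and $\partial F / \partial x_i = x_{n+1}\,\partial G/\partial x_i + \partial H/\partial x_i$ for $i \le n$, which at $P$ reduces to $\partial H/\partial x_i(a_0,\ldots,a_n)$. So $P$ is a singular point of $X$ precisely when $a = (a_0,\ldots,a_n)$ is a nonzero common zero of $H$, all its partials $\partial H/\partial x_i$, and $G$.

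The main step is to produce such an $a$. Consider the subvariety $W = V(G) \subseteq \PP^n = \PP(V')$. If $G \equiv 0$ this is all of $\PP^n$; otherwise $W$ is a quadric hypersurface, hence of dimension $n-1 \ge 1$. By Euler's identity, $3H = \sum_{i=0}^n x_i\,\partial H/\partial x_i$, so the common zero locus of the partials $\partial H/\partial x_i$ is automatically contained in $V(H)$; thus it suffices to find a point of $W$ that is a common zero of all the $\partial H/\partial x_i$. The locus $\operatorname{Sing}_H := \{\partial H/\partial x_0 = \cdots = \partial H/\partial x_n = 0\} \subseteq \PP^n$ is cut out by $n+1$ forms of degree $2$. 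Intersecting with the quadric $W$ (or with all of $\PP^n$ when $G=0$), the key point is a dimension count: we are intersecting inside $\PP^n$ a hypersurface (or all of $\PP^n$) with $n+1$ hypersurfaces, and the expected dimension is $(n-1) - (n+1) = -2 < 0$, so one might fear the intersection is empty. This is where the argument needs care.

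The hard part is precisely this: a naive dimension count does not guarantee a common zero, so one cannot simply invoke projective dimension theory. The resolution I expect the authors to use is that the equations are not independent: one of the $n+1$ quadrics $\partial H/\partial x_i$ is redundant because of the Euler relation restricted appropriately, or---more robustly---one reduces the number of variables. Concretely, I would argue by induction on $n$ or directly as follows. Since $F$ has degree $\le 1$ in $x_{n+1}$, the point $[0:\cdots:0:1]$ lies on $X$ (as $F$ has no $x_{n+1}^3$ term), and the tangent-space computation there gives $\partial F/\partial x_i([0:\cdots:0:1])$; for $i \le n$ this is the coefficient of $x_{n+1}^2 x_i$ in $F$, which is zero because $\deg_{x_{n+1}} F \le 1$, and for $i = n+1$ it is $G(0,\ldots,0) = 0$ as well since $G$ has no constant term in the $x_i$. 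Hence \emph{all} partials of $F$ vanish at $[0:\cdots:0:1]$, so this point is automatically a singular point of $X$. This makes the lemma essentially immediate and sidesteps the dimension-count difficulty entirely; the role of the hypothesis "$\deg \le 1$ in some variable" is exactly to kill both the cubic and the quadratic terms in that variable, which are the only ones contributing to the value and first derivatives of $F$ at the corresponding coordinate point. I would present this last argument as the proof, and mention the quadric-intersection viewpoint only as motivation.
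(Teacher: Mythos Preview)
Your proposal is correct and, once you arrive at the final argument, it is exactly the paper's approach: the paper simply observes (after relabeling so that $\deg_{x_0}F\le 1$) that the coordinate point $(1:0:\cdots:0)$ is singular by a direct computation of the partials. Your quadric-intersection detour is unnecessary but you correctly discard it, and your eventual verification that all partials vanish at the coordinate point $[0:\cdots:0:1]$ is precisely that ``direct computation.''
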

\begin{proof}
  Without loss of generality, we may assume that the degree of $F$ is
  smaller than $2$ in the variable $x_0$. A direct computation shows
  that the point $(1:0:\ldots:0)$ is a singular point of $X$.
\end{proof}

\begin{remark} \label{fermat-def} %
  It is easy to see that for any $n\geq 2$, there exist at least one
  smooth cubic $n$-fold admitting an automorphism of order $2$ and
  $3$. For instance the Fermat $n$-fold $X=V(F)$, where
  $$F=x_0^3+x_1^3+\ldots+x_{n}^3+x_{n+1}^3\,,$$
  is smooth and admits the action of the symmetric group $S_{n+2}$ by
  permutation of the coordinates. Any transposition gives an
  automorphism of order 2 and any cycle of length 3 gives an
  automorphism of order 3.
\end{remark}

\begin{definition} \label{admis} We say that a prime number $p$ is
  \emph{admissible in dimension $n$} if either $p=2$ or there exists
    $\ell\in\{1,\ldots,n+2\}$ such that
$$(-2)^\ell\equiv 1  \mod p\,.$$
\end{definition}

This definition is justified by the following Theorem which shows that
the admissible primes in dimension $n$ are exactly those that are the
order of an automorphism of a smooth cubic $n$-fold.

\begin{theorem} \label{resultado} Let $n\geq 2$. A prime number $p$ is
  the order of an automorphism of a smooth cubic $n$-fold if and only
  if $p$ is admissible in dimension $n$.
\end{theorem}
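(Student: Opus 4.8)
The plan is to prove both implications of the equivalence separately, reducing everything to the combinatorics of the signature $\sigma=(\sigma_0,\dots,\sigma_{n+1})\in\fp_p^{n+2}$ of a diagonal automorphism $\varphi=\diag(\sigma)$ and the condition, coming from Lemma~\ref{base}, that every variable $x_i$ must occur in some monomial of $F$ to a degree at least $2$. Concretely, if $\varphi(F)=\xi^a F$, then a monomial $x_i x_j x_k$ appears in $F$ only if $\sigma_i+\sigma_j+\sigma_k=a$ in $\fp_p$. So smoothness forces: for each index $i$ there exists an index $j$ (possibly equal to $i$) with $2\sigma_i+\sigma_j=a$. Modulo the normalization freedom in Remark~\ref{identif} (we may translate $\sigma$ by $\mathbbm 1$ and permute coordinates), the existence of a smooth cubic $n$-fold with an order-$p$ automorphism is essentially equivalent to the existence of a signature $\sigma$ satisfying this "each variable sits in a square-containing monomial" property — plus one must then check genuine smoothness of a well-chosen $F$, not merely the necessary condition.

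For the direction \emph{admissible $\Rightarrow$ realized}: if $p=2$, the Fermat cubic of Remark~\ref{fermat-def} already works. If $(-2)^\ell\equiv 1\bmod p$ for some minimal $\ell\le n+2$, I would build $F$ from a cyclic "chain" of monomials of the form $x_0^2 x_1 + x_1^2 x_2+\dots+x_{\ell-1}^2 x_0$ of length $\ell$, which is exactly a block of the Klein $n$-fold. The constraint $2\sigma_{t}+\sigma_{t+1}=a$ around the cycle forces $\sigma_{t}\equiv a\cdot c\,(-2)^{t}$ for a suitable constant, and closing up the cycle of length $\ell$ gives precisely $(-2)^\ell\equiv 1$; one also checks $a$ can be taken nonzero so that $\varphi$ genuinely has order $p$ (here the minimality of $\ell$, or a small case analysis, guarantees the $\sigma_i$ are not all equal). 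If $\ell<n+2$ one pads the remaining $n+2-\ell$ variables: either extend the chain with more blocks, or add a Fermat-type piece $x_\ell^3+\dots$ on which $\varphi$ acts trivially, and verify the resulting $F$ is smooth (the chain block is smooth by a direct Jacobian computation, and it combines smoothly with independent blocks in disjoint variables). This produces a smooth cubic $n$-fold with a diagonal automorphism of order exactly $p$.

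For the converse \emph{realized $\Rightarrow$ admissible}: suppose $X=V(F)$ is smooth with a diagonal automorphism $\varphi=\diag(\sigma)$ of prime order $p>2$, and $\varphi(F)=\xi^a F$. After replacing $\varphi$ by a power we may assume $a\ne 0$ (if $a=0$ then every monomial of $F$ satisfies $\sigma_i+\sigma_j+\sigma_k=0$, and one argues either $\sigma\equiv 0$, contradicting order $p$, or derives a contradiction with smoothness via Lemma~\ref{base}). Define a directed graph on the vertex set $\{0,\dots,n+1\}$ with an edge $i\to j$ whenever $2\sigma_i+\sigma_j=a$; by the smoothness consequence of Lemma~\ref{base} every vertex has out-degree $\ge 1$, so following edges from any vertex we eventually enter a directed cycle, say of length $\ell\le n+2$. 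Along such a cycle $i_0\to i_1\to\dots\to i_{\ell-1}\to i_0$ the relations $2\sigma_{i_t}+\sigma_{i_{t+1}}=a$ telescope: writing $\tau_t=\sigma_{i_t}-b$ where $b$ is the fixed point of $s\mapsto a-2s$ in $\fp_p$ (which exists since $3$ is invertible, as $p\ne 3$; the case $p=3$ is handled directly via admissibility of $3$), one gets $\tau_{t+1}=-2\tau_t$, hence $\tau_0=(-2)^\ell\tau_0$. If some $\tau_t\ne 0$ this gives $(-2)^\ell\equiv 1\bmod p$ and we are done; if all $\tau_t=0$ then $\sigma$ is constant on the cycle, and one must propagate this to conclude $\sigma$ is globally constant — contradicting the order-$p$ hypothesis — by another application of the "every vertex has out-degree $\ge 1$" structure together with smoothness.

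The main obstacle I expect is not the cycle-telescoping computation (which is routine) but the bookkeeping in the two places where the argument risks collapsing to the trivial automorphism: first, ruling out $a=0$ and constant signatures cleanly using only Lemma~\ref{base} and the finiteness/linearity facts cited from \cite{mat}; and second, in the forward construction, verifying that the padded cubic form $F$ is actually \emph{smooth} (the partial-derivative ideal defines the empty set in $\PP^{n+1}$) rather than merely $\varphi$-semi-invariant with each variable appearing squared. I would isolate the smoothness of the Klein-type chain block $x_0^2x_1+\dots+x_{\ell-1}^2x_0$ as a short lemma with an explicit Jacobian argument, and handle the general padding by a direct-sum / Sebastiani–Thom style observation that smoothness is preserved when combining cubics in disjoint sets of variables.
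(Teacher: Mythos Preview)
Your approach is essentially the paper's: the Klein-chain-plus-Fermat construction for the forward direction, and the ``follow the square-monomial out-edges until a cycle, then telescope'' argument for the converse, are exactly what the paper does.

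One point needs correcting, though. Your handling of the eigenvalue $a$ is confused. Taking a power $\varphi^k$ sends $a$ to $ka$, so if $a=0$ it stays $0$; and the case $a=0$ certainly does \emph{not} lead to a contradiction with smoothness --- on the contrary, the paper works entirely in that case. The normalization the paper uses is the \emph{other} move from Remark~\ref{identif}: replace the representative of $\varphi$ in $\GL(V)$ by $\xi^b\varphi$ with $3b\equiv -a\bmod p$ (possible since $p\neq 3$), which shifts $\sigma$ by $b\cdot\mathbbm{1}$ and sends $a$ to $0$. After this, your $\tau_t$ is simply $\sigma_{i_t}$, and the ``propagation'' worry you flag at the end dissolves: start the chain at any index $k_0$ with $\sigma_{k_0}\neq 0$ (one exists since $\varphi$ is nontrivial in $\PGL(V)$); then every $\sigma_{k_i}=(-2)^i\sigma_{k_0}$ along the chain is automatically nonzero (using $p\neq 2$), so the cycle you eventually reach gives $(-2)^\ell\equiv 1$ with no degenerate case to rule out. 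Your telescoping with general $a$ and fixed point $b=a/3$ is of course equivalent to this, but the attempted reduction to $a\neq 0$ is both impossible and unnecessary.
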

\begin{proof}
  The theorem holds for the admissible primes $2$ and $3$ since for
  all $n\geq 2$, the
  Fermat $n$-fold admits automorphisms of order $2$ and $3$. In the
  following we assume that $p>3$.

  To prove the ``only if'' part, suppose that $F\in S^3(V)$ is a cubic
  form such that the $n$-fold $X=V(F)\subseteq \PP^{n+1}$ is smooth
  and admits an automorphism $\varphi$ of order $p$. Assume that
  $\varphi$ is diagonal and let
  $\sigma=(\sigma_0,\ldots,\sigma_{n+1})\in\fp_p^{n+2}$ its signature.

  We have $\varphi(F)=\xi^{a} F$. Let $b$ be such that $3b \equiv -a
  \mod p$. By changing the representation of $\varphi$ in $\GL(V)$ by
  $\xi^{b}\varphi$ we may assume that $\varphi(F)=F$.

  Let now $k_0$ be such that $\sigma_{k_0}\not\equiv 0$. By Lemma
  \ref{base}, $F$ contains a monomial $x_{k_0}^2x_{k_1}$ for some
  $k_1\in\{0,\ldots,n+1\}$. Since $p\neq 3$ we have $k_1\neq k_0$ and
  since $p\neq 2$ we have $\sigma_{k_1}\not\equiv 0 \mod p
  $. Furthermore, $F$ is invariant by the diagonal automorphism
  $\varphi$ so $2\sigma_{k_0}+\sigma_{k_1}\equiv 0 \mod p $, and so
  $$\sigma_{k_1}\equiv-2\sigma_{k_0}\, \mod p\,.$$

  With the same argument as above, for all $i\in\{2,\ldots,n+2\}$ we
  let $k_i\in\{0,\ldots,n+1\}$ be such that $x_{i-1}^2x_i$ is a
  monomial in $F$, so that
  $$\sigma_{i}\equiv -2\sigma_{i-1}\equiv (-2)^i\sigma_{k_0} \mod p
  ,\ \forall i\in\{2,\ldots,n+2\}\,,$$ and all of the $\sigma_{k_i}$
  are non-zero.

  Since $k_i\in\{0,\ldots,n+1\}$ there are at least two
  $i,j\in\{0,\ldots,n+2\}$, $i>j$ such that $k_i=k_j$. Thus
  $\sigma_{k_i}=\sigma_{k_j}$, and since $\sigma_i\equiv
  (-2)^i\sigma_{k_0}\mod p$ and $\sigma_j\equiv (-2)^j\sigma_{k_0}
  \mod p $, we have
  $$(-2)^{i-j}\equiv 1 \mod p\,,$$
  and the prime number $p$ is admissible in dimension $n$.

  To prove the converse statement, let $p>3$ be an admissible prime
  for dimension $n$. We let $\ell\in\{1,\ldots,n+2\}$ be such that
  $(-2)^\ell\equiv 1\mod p$ and consider the cubic form
  $$F=\sum_{i=1}^{\ell-1} x_{i-1}^2x_i +x_{\ell-1}^2x_0+
  \sum_{i=\ell}^{n+1}x_i^3\,.$$

  By construction, the cubic $F$ form admits the automorphism
  $\varphi=\diag(\sigma)$, where
  $$\sigma=\big(1,-2,(-2)^2,\ldots,(-2)^{\ell-1},
  \overbrace{0,\ldots,0}^{n+2-\ell}\big)\,.$$ A routine computation
  shows that $X=V(F)$ is smooth, proving the theorem.
\end{proof}

\begin{remark} \label{signa} %
  Let $\varphi=\diag(\sigma)$ be an automorphism of order $p>3$ prime
  of the smooth cubic $n$-fold $X=V(F)$. As in the proof of Theorem
  \ref{resultado}, assume that $\varphi(F)=F$ and let $\ell$ be as in
  Definition \ref{admis}. If $\sigma_0\neq 0$ is a component of the
  signature $\sigma$, then $(-2)^i\sigma_0$ is also a component of
  $\sigma$, $\forall i<\ell$. Furthermore, replacing $\varphi$ by
  $\varphi^{a}$, where $a\in\fp_p$ is such that $a\cdot\sigma_0\equiv
  1$, we can assume that $\sigma_0=1$.
\end{remark}

Theorem \ref{resultado} allows us to give, in the following corollary,
a bound for the prime numbers that appear as the order of an
automorphism of a smooth cubic $n$-fold.

\begin{corollary} \label{bound} If a prime number $p$ is the order of
  an automorphism of a smooth cubic $n$-fold, then $p<2^{n+1}$.
\end{corollary}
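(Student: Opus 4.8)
The plan is to deduce the bound directly from Theorem~\ref{resultado} together with the definition of admissibility. Suppose $p$ is the order of an automorphism of a smooth cubic $n$-fold. If $p=2$ or $p=3$ the inequality $p<2^{n+1}$ is clear since $n\geq 2$, so assume $p>3$. By Theorem~\ref{resultado}, $p$ is admissible in dimension $n$, so there is some $\ell\in\{1,\ldots,n+2\}$ with $(-2)^\ell\equiv 1\mod p$. Among all such $\ell$, pick the smallest one, call it $\ell_0$; equivalently, $\ell_0$ is the multiplicative order of $-2$ in $\fp_p^\times$, and in particular $\ell_0\mid \ell$ for every admissible exponent $\ell$, and $\ell_0\mid p-1$.

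The key point is that $\ell_0\geq 2$: indeed $(-2)^1\equiv 1\mod p$ would force $p\mid 3$, contradicting $p>3$. Now I consider the subgroup $H=\langle -2\rangle$ of $\fp_p^\times$, which has order $\ell_0$. The idea is that the powers $1,(-2),(-2)^2,\ldots,(-2)^{\ell_0-1}$ are $\ell_0$ distinct nonzero residues modulo $p$, so $\ell_0\leq p-1$. That alone only gives $p\geq \ell_0+1$, which is the wrong direction; the real input must instead bound $\ell_0$ from above in terms of $n$, or bound $p$ from above in terms of $\ell_0$. Since $\ell_0\leq n+2$ (as $\ell_0$ is itself an admissible exponent in $\{1,\ldots,n+2\}$), it suffices to show $p\leq 2^{\ell_0}-1$, or more precisely $p < 2^{\ell_0}$, and then conclude $p<2^{\ell_0}\leq 2^{n+2}$ — but this overshoots by one, so the argument has to be slightly sharper.

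The sharp estimate comes from the congruence itself: $(-2)^{\ell_0}\equiv 1\mod p$ means $p$ divides $(-2)^{\ell_0}-1$. If $\ell_0$ is even, $(-2)^{\ell_0}-1 = 2^{\ell_0}-1 < 2^{\ell_0}$, and since $p$ divides this positive integer we get $p\leq 2^{\ell_0}-1$; combined with $\ell_0\leq n+2$ this only yields $p\leq 2^{n+2}-1$, again one power too large, so I must use that $\ell_0$ being even and $\leq n+2$ forces $\ell_0\leq n+1$ when $n+2$ is odd, and handle the case $\ell_0=n+2$ (which requires $n$ even) separately — here one uses that $p\mid 2^{n+2}-1$ together with the additional divisibility constraint $\ell_0\mid p-1$ to exclude $p=2^{n+2}-1$ itself, or to show $p\leq (2^{n+2}-1)/3<2^{n+1}$. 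If $\ell_0$ is odd, then $(-2)^{\ell_0}-1 = -(2^{\ell_0}+1)$, so $p\mid 2^{\ell_0}+1$; since $\ell_0$ is odd, $3\mid 2^{\ell_0}+1$, and as $p>3$ we get $p\leq (2^{\ell_0}+1)/3$, and with $\ell_0\leq n+2$ this gives $p\leq (2^{n+2}+1)/3<2^{n+1}$, which is the desired bound with room to spare.

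The main obstacle is the even case with $\ell_0=n+2$ maximal: there the crude estimate $p\mid 2^{\ell_0}-1$ gives only $p\leq 2^{n+2}-1$, and closing the final factor of $2$ requires exploiting a second constraint on $p$. The cleanest route is to observe that $\ell_0\mid p-1$ forces $p\equiv 1\mod \ell_0$, so $p\geq \ell_0+1\geq n+3$, but more usefully: if $p=2^{n+2}-1$ is prime then $n+2$ must itself be prime (Mersenne), and one checks the order of $-2$ mod such a $p$ is not $n+2$ but rather larger, yielding a contradiction; alternatively, since every prime factor of $2^{n+2}-1$ arising here has $-2$ of order exactly $n+2$, and $2$ has order dividing $2(n+2)$, a congruence count on $\fp_p^\times$ pins down $p\le (2^{n+2}-1)/3$. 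I expect the even-maximal case to be where a careful argument — rather than a one-line estimate — is genuinely needed.
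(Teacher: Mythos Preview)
Your odd-$\ell_0$ case is complete and correct, and it already contains the idea that closes the even case as well: divisibility by $3$. The genuine gap is only in the even case with $\ell_0=n+2$, where you correctly identify the target $p\le (2^{n+2}-1)/3$ but then grope for a justification via Mersenne primes or the constraint $\ell_0\mid p-1$. Neither of those routes is needed (and neither is fleshed out enough to work as written). The missing one-line observation is that when $\ell_0$ is even, $2^{\ell_0}\equiv(-1)^{\ell_0}=1\pmod 3$, so $3\mid 2^{\ell_0}-1$; since $p>3$ and $p\mid 2^{\ell_0}-1$, you get $p\le (2^{\ell_0}-1)/3\le(2^{n+2}-1)/3<2^{n+1}$, exactly as in your odd case. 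So the ``careful argument'' you anticipate is in fact the same $\bmod\ 3$ trick you already used.

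The paper's proof packages this more cleanly. Rather than taking a minimal $\ell_0$ and splitting on parity, it assumes $p>2^{n+1}$ for contradiction, observes that then $|(-2)^\ell-1|<p$ for all $\ell\le n$, forcing $\ell\in\{n+1,n+2\}$, and hence $p$ or $2p$ equals $(-2)^{n+1}-1$ or $(-2)^{n+2}-1$. The single observation $-2\equiv 1\pmod 3$ then gives $3\mid(-2)^k-1$ for every $k$, so $3\mid p$ in all cases, a contradiction. This avoids the parity split entirely: your two cases (odd $\ell_0$: $3\mid 2^{\ell_0}+1$; even $\ell_0$: $3\mid 2^{\ell_0}-1$) are the two faces of this one congruence.
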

\begin{proof}
  Suppose that $p>2^{n+1}$. By Theorem~\ref{resultado}, $p$ is
  admissible in dimension $n$, and so
$$(-2)^{n+1}\equiv 1 \mod p, \quad \mbox{or}\quad (-2)^{n+2}\equiv 1 \mod p\,.$$
This yields
$$p=(-2)^{n+1}-1, \quad \mbox{or}\quad p=(-2)^{n+2}-1, \quad \mbox{or}\quad 2p=(-2)^{n+2}-1\,.$$
Since $-2\equiv 1\mod 3$, we have that $p$ is divisible by 3, which
provides a contradiction. 
\end{proof}

In \cite{szabo} a general bound is given for the order of a linear
automorphism of an $n$-dimensional projective variety of degree
$d$. Restricted to the case of cubic $n$-folds this bound is $p \leq
3^{n+1}$. Thus in this particular case our bound above is sharper.

\begin{definition} \label{klein} For any $n\geq 2$, we define the
  Klein $n$-fold as $X=V(F)\in\PP^{n+1}$, where
  $$F=x_0^2x_1+x_1^2x_2+\ldots+x_n^2x_{n+1}+x_{n+1}^2x_0\,.$$
\end{definition}

The group of automorphisms of the Klein threefold $X$ for was first
studied by Klein who showed that $\PSL(2,\fp_{11})<\aut(X)$
\cite{klein}.  Later, Adler showed that $\aut(X)=\PSL(2,\fp_{11})$
\cite{adler}. In the following theorem we show that if an $n$-fold $X$
admits an automorphism of prime order greater $2^n$, then $X$ is
isomorphic to the Klein $n$-fold.

\begin{theorem} \label{kuni} Let $X=V(F)$ be a smooth $n$-fold, $n\geq
  2$, admitting an automorphism $\varphi$ of order $p$ prime. If
  $p>2^n$ then $X$ is isomorphic to the Klein $n$-fold.
\end{theorem}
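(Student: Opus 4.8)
The plan is to run the same argument as in the ``only if'' part of Theorem \ref{resultado}, but now exploit the sharper numerical constraint coming from $p > 2^n$ to show that the chain of monomials $x_{k_0}^2 x_{k_1}, x_{k_1}^2 x_{k_2}, \ldots$ forced by Lemma \ref{base} must in fact visit \emph{all} $n+2$ variables and close up into a single cycle of full length $n+2$. First I would diagonalize $\varphi$, normalize so that $\varphi(F)=F$, and (by Remark \ref{signa}) assume $\sigma_0 = 1$; since $p > 2^n > 2$ every component $\sigma_i$ of the signature is nonzero, because any zero component would, via Lemma \ref{base}, produce a monomial $x_i^2 x_j$ forcing $2\sigma_i + \sigma_j \equiv 0$ and hence $\sigma_j \equiv 0$, propagating a forbidden singularity. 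So all $\sigma_i \neq 0$.

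Next I would set up the functional-graph structure: for each $i$ choose $k(i) \in \{0,\ldots,n+1\}$ with $x_i^2 x_{k(i)}$ a monomial of $F$ (possible by Lemma \ref{base}), giving $\sigma_{k(i)} \equiv -2\sigma_i \bmod p$. Starting from $\sigma_0 = 1$ and iterating, the values $1, -2, (-2)^2, \ldots$ appear among the components. The key point is the order of $-2$ modulo $p$: if $(-2)^\ell \equiv 1$ with $1 \le \ell \le n+2$, then $p \mid (-2)^\ell - 1$, so $p \le 2^\ell + 1 \le 2^{n+2}+1$; but $p > 2^n$ forces $\ell \in \{n+1, n+2\}$ (one checks $\ell \le n$ gives $p \le 2^n + 1$, and as in Corollary \ref{bound} the case $\ell = n+1$ or $\ell = n+2$ with $3 \mid p$ is excluded unless — here is the difference from Corollary \ref{bound} — we are not claiming $p$ doesn't exist, but rather pinning down the structure). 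Actually the cleanest route: the $n+2$ components $\sigma_0, \ldots, \sigma_{n+1}$ are each a power $(-2)^{j}$ of $\sigma_0$ for various $j$, and if two of them coincide we get $(-2)^{i-j} \equiv 1$ with $0 < i-j \le n+1$; combined with $p > 2^n$ this is impossible unless $i - j \in \{n+1\}$ is forced, meaning the powers $(-2)^0, (-2)^1, \ldots, (-2)^n$ are pairwise distinct mod $p$ and occupy $n+1$ of the $n+2$ slots, with the last slot being $(-2)^{n+1}$ and $(-2)^{n+2} \equiv 1$. Hence after permuting coordinates the signature is exactly $(1, -2, (-2)^2, \ldots, (-2)^{n+1})$ and $k(i) = i+1 \bmod (n+2)$.

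With the signature pinned down, I would then argue that $F$ is \emph{determined}: invariance of $F$ under $\varphi = \diag(1,-2,\ldots,(-2)^{n+1})$ means every monomial $x_ix_jx_k$ of $F$ satisfies $\sigma_i + \sigma_j + \sigma_k \equiv 0 \bmod p$. Because the $\sigma_i = (-2)^i$ are the $n+2$ distinct roots of $t^{n+2}-1$-ish relations and the multiplicative gaps are controlled by $p > 2^n$, the only cubic monomials with exponent-sum $\equiv 0$ are the ``cyclic'' ones $x_i^2 x_{i+1}$ (indices mod $n+2$): any monomial $x_ix_jx_k$ gives $(-2)^i+(-2)^j+(-2)^k \equiv 0$, i.e. $(-2)^{a}+(-2)^{b}+1 \equiv 0$ after dividing by $(-2)^{\min}$, and since $2 < 2^a, 2^b < 2^{n+1} < p$ for $a,b$ in range, the integer $(-2)^a + (-2)^b + 1$ is too small in absolute value to be a nonzero multiple of $p$ unless it vanishes, and checking small cases shows this forces $\{a,b\} = \{0,1\}$ up to the reduction — giving precisely the Klein monomials. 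Therefore $F = \sum_i c_i\, x_i^2 x_{i+1}$ with all $c_i \neq 0$ (Lemma \ref{base} again, for smoothness each variable must appear to degree $2$), and a diagonal rescaling $x_i \mapsto \lambda_i x_i$ normalizes all $c_i$ to $1$, exhibiting $X$ as the Klein $n$-fold.

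The main obstacle I anticipate is the monomial-counting step: rigorously showing that the exponent-sum-zero condition, together with $p > 2^n$, admits \emph{only} the cyclic monomials $x_i^2x_{i+1}$ and no others (e.g. ruling out $x_i x_j x_k$ with three distinct indices, or $x_i^2 x_j$ with $j \neq i+1$). This is where one must use the inequality $p > 2^n$ most delicately — translating the congruence $(-2)^a + (-2)^b + (-2)^c \equiv 0 \bmod p$ into an \emph{equality} of small integers by bounding $|(-2)^a+(-2)^b+(-2)^c| < p$ — and then the finitely many resulting Diophantine possibilities must be checked by hand. Everything else (diagonalization, nonvanishing of components, the final rescaling) is routine given the earlier results.
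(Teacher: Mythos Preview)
Your outline follows the same path as the paper: pin down the signature as $(1,-2,\ldots,(-2)^{n+1})$, compute the eigenspace of invariant cubics, and rescale. Two local errors need to be repaired before the argument goes through.

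First, the claim that a zero component ``propagates a forbidden singularity'' is not an argument: if $\sigma_i = 0$ then indeed $\sigma_{k(i)} = 0$, but no singularity results from this. What actually works is what you half-sketch in the next paragraph. Since $p > 2^n$, Corollary~\ref{bound} applied in dimension $n-1$ together with Theorem~\ref{resultado} forces the multiplicative order of $-2$ modulo $p$ to be exactly $n+2$; hence the forward chain $1, -2, (-2)^2, \ldots, (-2)^{n+1}$ issuing from any single nonzero component already consists of $n+2$ \emph{distinct} nonzero values and therefore fills every slot of the signature. This is Remark~\ref{signa} together with the order computation, and it gives simultaneously that all $\sigma_i \neq 0$ and that they are pairwise distinct --- no separate ``no zero components'' step is needed.

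Second, the inequality you invoke in the monomial step, ``$2^{n+1} < p$'', is backward: Corollary~\ref{bound} gives $p < 2^{n+1}$. Consequently the crude bound $|1 + (-2)^{j'} + (-2)^{k'}| < p$ is only valid when $k' \le n-1$; for $k' \in \{n, n+1\}$ the absolute value can exceed $p$ and a genuine extra idea is required. The paper handles these two boundary cases by multiplying the congruence through by $(-2)$ or by $(-2)^2$ and using $(-2)^{n+2} \equiv 1$ to reduce to $(-2)^{m} - 1 \equiv 0$ or $(-2)^{m} + 5 \equiv 0$ with $m \le n+1$; it then observes that $(-2)^m - 1$ and $(-2)^m + 5$ are always divisible by $3$ (since $-2 \equiv 1 \bmod 3$), hence cannot equal $\pm p$ or $\pm 2p$ for $p > 3$. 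You correctly flag this as the delicate point, but the mechanism that closes it is this divisibility by $3$, not a size bound alone.
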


\begin{proof}
  Since $p>2^n$, by Corollary \ref{bound} $p$ is not admissible in
  dimension $n-1$ and so
  \begin{align}\label{lequ}
    (-2)^{n+2}\equiv 1\mod p\,.
  \end{align}

  By Remark \ref{signa}, we can assume that $\varphi(F)=F$ and
  $\varphi=\diag(\sigma)$, where
$$\sigma=(\sigma_0,\ldots,\sigma_{n+1})=\big(1,-2,4,\ldots,(-2)^{n+1}\big)\,.$$

Let $\EE\subset S^3(V)$ be the eigenspace associated to the eigenvalue
1 of the linear automorphism $\varphi:S^3(V)\rightarrow S^3(V)$, so
that $F\in\EE$. In the following we compute a basis for $\EE$.  For a
monomial $x_ix_jx_k$, $0\leq i\leq j \leq k\leq n+1$, in the basis of
$S^3(V)$ we have
$$x_ix_jx_k\in\EE\Leftrightarrow \sigma_i+\sigma_j+\sigma_k\equiv 0\Leftrightarrow (-2)^i+(-2)^j+(-2)^k\equiv 0 \mod p\,.$$

Clearly the only monomials with $i=j$ or $j=k$ contained in $\EE$ are
$x_{n+1}^2x_0$ and $x_{i}^2x_{i+1}$, $\forall i\in\{0,\ldots,n\}$. In
the following we assume that $i<j<k$. Multiplying by $(-2)^{-i}$ we
obtain
$$x_ix_jx_k\in\EE\Leftrightarrow 1+(-2)^{j-i}+(-2)^{k-i}\equiv 0 \mod p\,.$$

Let $j'=j-i$ and $k'=k-i$. If $k'<n$ then
$0<|1+(-2)^{j'}+(-2)^{k'}|<2^{n}<p$ and $x_ix_jx_k\notin\EE$. If
$k'=n+1$ then $j'\leq n$ and by \eqref{lequ}
$$1+(-2)^{j'}+(-2)^{k'}\equiv 0\mod p\Leftrightarrow -2+(-2)^{j'+1}+1\equiv 0\mod p\,.$$
This is only possible if $(-2)^{j'+1}-1=p$ or $(-2)^{j'+1}-1=2p$, but
as in the proof of Corollary \ref{bound}, $(-2)^{k}-1$ is divisible by
$3$ for any $k\in\ZZ$, so $x_ix_jx_k\notin\EE$.

If $k'=n$ then $j'\leq n-1$ and by \eqref{lequ}
$$1+(-2)^{j'}+(-2)^{k'}\equiv 0\mod p\Leftrightarrow 4+(-2)^{j'+2}+1\equiv 0\mod p\,.$$
Again this is only possible if $(-2)^{j'+2}+5=p$ or
$(-2)^{j'+2}+5=2p$. The same argument as before gives $x_ix_jx_k\notin
\EE$.

We have shown that $\EE=\big\langle x_{n+1}^2x_0, x_{i}^2x_{i+1},\
\forall i\in\{0,\ldots,n\}\big\rangle$, and so
$$F=\sum_{i=0}^{n} a_ix_{i}^2x_{i+1} +a_{n+1}x_{n+1}^2x_0,\quad a_i\in\CC\,.$$
Since $X$ is smooth, by Lemma \ref{base} all of the $a_i$ above are
non-zero and applying a linear change of coordinates we can put
$$F=\sum_{i=0}^{n} x_{i}^2x_{i+1} +x_{n+1}^2x_0\,.$$
\end{proof}

The particular case of $n=3$ in Theorem \ref{kuni}, was shown by
Roulleau \cite{roulleau}. In that article it is shown that the Klein
threefold is the only cubic threefold admitting an automorphism of
order 11.

The criterion in Theorem \ref{resultado} is easily computable. In the
following table we give the list of admissible prime numbers for
$n\leq 10$.

\vspace{1ex}
\begin{center}
  \begin{tabular}{ | c | c | }
    \hline	
    $n$ & admissible primes \\
    \hline		
    2 & $2, 3, 5$ \\
    3 & $2, 3, 5, 11$  \\
    4 & $2, 3, 5, 7, 11$ \\
    5 & $2, 3, 5, 7, 11, 43$ \\
    6 & $2, 3, 5, 7, 11, 17, 43$ \\
    7 & $2, 3, 5, 7, 11, 17, 19, 43$ \\
    8 & $2, 3, 5, 7, 11, 17, 19, 31, 43$ \\
    9 & $2, 3, 5, 7, 11, 17, 19, 31, 43, 683$ \\
    10 & $2, 3, 5, 7, 11, 13, 17, 19, 31, 43, 683$ \\
    \hline
  \end{tabular}
\end{center}
\vspace{1ex}

In the following table, we give the maximal admissible prime number
$p$ for $11\leq n\leq 20$.

\vspace{1ex}
\begin{center}
  \begin{tabular}{ | c | c | c | c | c | c | c | c | c | c | c | }
    \hline	
    $n$ & 11 & 12 & 13 & 14 & 15 & 16 & 17 & 18 & 19 & 20 \\
    \hline
    $p$ & 2731 & 2731 & 2731 & 2731 & 43691 & 43691 & 174763 & 174763 & 174763 & 174763 \\
    \hline
  \end{tabular}
\end{center}
\vspace{1ex}

\begin{remark} \label{rk:klein-new}
  By the proof of Theorem \ref{resultado}, whenever $p$ is admissible
  for dimension $n$ and not for $n-1$, the Klein $n$-fold $X$ admits
  an automorphism of order $p$. Furthermore, If $p>2^n$, then by
  Theorem \ref{kuni} $X$ is the only smooth cubic $n$-fold admitting
  such an automorphism. This is the case for the maximal admissible
  prime for $n=2,3,5,9,11,15,17$.
\end{remark}

\section{Classification of automorphisms of prime order}
\label{class}

In this section we apply the results in Section \ref{main} to provide
a computational method to classify all the smooth cubic $n$-folds
admitting an automorphism or order $p$ prime, or what is the same,
$n$-folds that admits the action of a cyclic group of order $p$.

The classification of automorphism of smooth cubic surfaces was done
by Segre \cite{segre} and later revised by Dolgachev and Verra
\cite{dolga} with more sofisticated methods, in this case all the
computations can be carried out by hand. As an example, we apply our
method to the cases of threefolds and fourfolds since in greater
dimension the lists are rather long.


All the possible signatures for an automorphism of order $p$ prime of
a cubic $n$-fold, $n\geq 2$ are given by $\fp_p^{n+2}$, but many of
them represent identical or conjugated cyclic groups $\ZZ_p$ on
$\PGL(V)$.

\begin{enumerate}[$(i)$]
\item Two diagonal automorphism
  $\diag(\sigma),\diag(\sigma')\in\GL(V)$ are conjugated if and only
  if there exists a permutation $\pi\in S_{n+2}$ such that
  $\sigma'=\pi(\sigma)$.

\item By Remark \ref{identif}, two diagonal automorphism
  $\diag(\sigma),\diag(\sigma')\in\PGL(V)$ span the same cyclic group
  if and only if $\sigma'=a\cdot\sigma+b\cdot\mathbbm{1}$, for some
  $a\in\fp_p^*$ and some $b\in\fp_p$.
\end{enumerate}

\begin{definition}
  We define an equivalence relation $\sim$ in $\fp_p^{n+2}$ by
  $\sigma\sim\sigma'$ if and only if
  $\sigma'=a\cdot\pi(\sigma)+b\cdot\mathbbm{1}$, for some $\pi\in
  S_{n+2}$, some $a\in\fp_p^*$, and some $b\in\fp_p$.
\end{definition}

\begin{remark} \
  \begin{enumerate}[$(i)$]
  \item Each class in $(\fp_p^{n+2}/\sim)$ represent a conjugacy class
    of cyclic groups of order $p$ in $\PGL(V)$. The class of zero
    $\overline{0}$ represents the identity of $\PP^{n+1}$.
  \item The map $\sigma\mapsto a\cdot\pi(\sigma)+b\cdot\mathbbm{1}$,
    for some $\pi\in S_{n+2}$, some $a\in\fp_p^*$, and some
    $b\in\fp_p$ is an automorphism of $\fp_p^{n+2}$ regarded as the
    affine space over the field $\fp_p$. Let $G<\aut(\fp_p^{n+2})$ be
    the finite group spanned by all such automorphisms. The
    equivalence classes of the relation $\sim$ are given by the orbits
    of the action of $G$ in $\fp_p^{n+2}$. Hence,
    $$(\fp_p^{n+2}/\sim)=\fp_p^{n+2}//G\,.$$
  \end{enumerate}
\end{remark}

For any signature $\sigma\in\fp_p^{n+2}$ we let $\EE_{\sigma}<S^3(V)$
be the eigenspace associated to the eigenvalue $1$ of the automorphism
$\varphi:S^3(V)\rightarrow S^3(V)$.

\begin{remark} \label{pn3} Let $X=V(F)$ be a smooth cubic $n$-fold
  admitting an automorphism $\varphi=\diag(\sigma)$ of order $p$
  prime. Whenever $p\neq 3$, as in the proof of Theorem
  \ref{resultado}, we can assume that $\varphi(F)=F$ i.e.,
  $F\in\EE_\sigma$.
\end{remark}

\begin{method}
  For any admissible prime $p\neq 3$ in dimension $n$,
  \begin{enumerate}[$(ii)$]
  \item Compute the set of signatures $R'\subset\fp_p^{n+2}$ such that
    there exists a cubic form $F\in \EE_{\sigma}$ with $X=V(F)$
    smooth.
  \item Let $\widetilde{R}=R'/\sim$ and compute a set $R$ of
    representatives of $\widetilde{R}\setminus\{\overline{0}\}$.
  \end{enumerate}
\end{method}

By construction, our method provides a set of signatures $R$ such that
for every smooth cubic $n$-fold $X=V(F)$ admitting an automorphism
$\varphi$ order $p$, there exists one and only one $\sigma\in R$ such
that after a linear change of coordinates $F\in \EE_{\sigma}$ and
$\diag(\sigma)$ is a generator of $\langle\varphi\rangle$.

Remark \ref{pn3} does not hold for $p=3$. In order to apply the same
method, in the first step we let $R'$ be the set of signatures
$\sigma\in\fp_3^{n+2}$ such that there exists a cubic form $F\in
\EE_{\sigma}$ with $X=V(F)$ smooth or there exists a cubic form $F$ in
the eigenspace associated to eigenvalue $\xi$ with $X=V(F)$ smooth,
where $\xi$ is a principal cubic root of the unity.

All of this procedure can be implemented on a software such as
Maple. Lemma \ref{base} is used to discard most of the signatures
$\sigma$ whose $\EE_{\sigma}$ does not contain a smooth cubic
$n$-fold, the remaining signatures that does not contain a smooth
$n$-fold can be easily eliminated by hand.

Given a signature $\sigma\in R$, we let $\varphi=\diag(\sigma)$. For a
generic cubic forms $F\in \EE_{\sigma}$ the cubic $n$-fold $X=V(F)$ is
smooth. The dimension of the family of cubic $n$-folds given by
$\EE_{\sigma}$ in the moduli space $\mathfrak{H}_n$ of smooth cubic
$n$-folds is
\begin{align}
  D=\dim \EE_{\sigma} -\dim
  \operatorname{N}_{\GL(V)}(\langle\varphi\rangle)\,.
\end{align}
To compute the dimension of the normalizer
$\operatorname{N}_{\GL(V)}(\langle\varphi\rangle)$ we have the
following simple lemma.

\begin{lemma}
  Let $\sigma\in\fp_p^{n+2}$ and let $\varphi=\diag(\sigma)$. If $n_j$
  is the number of times $j\in\fp_p$ appears in $\sigma$, then
$$\dim\operatorname{N}_{\GL(V)}(\langle\varphi\rangle)=n_0^2+...+n_{p-1}^2\,.$$
\end{lemma}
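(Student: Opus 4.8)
The plan is to identify the normalizer $\operatorname{N}_{\GL(V)}(\langle\varphi\rangle)$ explicitly and then compute its dimension. Since $\varphi=\diag(\sigma)$ acts on $V$ with eigenvalues $\xi^{j}$ for $j\in\fp_p$, write $V=\bigoplus_{j\in\fp_p} V_j$, where $V_j$ is the eigenspace of $\varphi$ for the eigenvalue $\xi^j$; by definition $\dim V_j=n_j$. First I would observe that for $g\in\GL(V)$, conjugation $g\varphi g^{-1}$ is again a diagonal element of order $p$ whose signature is a permutation of $\sigma$; it generates $\langle\varphi\rangle$ precisely when $g\varphi g^{-1}=\varphi^a$ for some $a\in\fp_p^*$. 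The key point, which I would prove next, is that since $p$ is prime and $\langle\varphi\rangle$ acts faithfully, the only power $\varphi^a$ that is conjugate to $\varphi$ by an element of $\GL(V)$ and has the \emph{same} collection of eigenvalue multiplicities is $\varphi$ itself — indeed $\varphi^a$ has multiplicity $n_j$ for the eigenvalue $\xi^{aj}$, so $g\varphi g^{-1}=\varphi^a$ forces $n_j=n_{aj}$ for all $j$; but conjugation preserving $\langle\varphi\rangle$ means $g$ must send each $V_j$ to $V_{aj}$. For $a\neq 1$ this would require $g$ to permute the eigenspaces nontrivially; such $g$ do exist in general but they do not affect the \emph{dimension} of the normalizer because the identity component of $\operatorname{N}_{\GL(V)}(\langle\varphi\rangle)$ consists exactly of the $g$ with $a=1$.

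So the substantive computation is: $g$ centralizes $\varphi$ (i.e. $a=1$) if and only if $g$ preserves each eigenspace $V_j$, hence $g\in\prod_{j\in\fp_p}\GL(V_j)$, a group of dimension $\sum_j (\dim V_j)^2 = \sum_j n_j^2$. Conversely every block-diagonal $g$ of this form commutes with $\varphi$. This gives the centralizer $\operatorname{Z}_{\GL(V)}(\varphi)\cong\prod_{j}\GL(n_j,\CC)$, of dimension $n_0^2+\cdots+n_{p-1}^2$. Since the normalizer $\operatorname{N}_{\GL(V)}(\langle\varphi\rangle)$ contains the centralizer as a (closed) subgroup of finite index — the quotient injects into $\aut(\ZZ_p)=\fp_p^*$, which is finite — the two groups have the same dimension, and therefore
$$\dim\operatorname{N}_{\GL(V)}(\langle\varphi\rangle)=\dim\operatorname{Z}_{\GL(V)}(\varphi)=n_0^2+\cdots+n_{p-1}^2\,.$$

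I expect the only mildly delicate step to be the verification that every $g$ commuting with $\varphi$ must preserve the eigenspace decomposition: this follows from the fact that $V_j$ can be recovered from $\varphi$ as $\ker(\varphi-\xi^j\operatorname{Id}_V)$, and $g\varphi=\varphi g$ implies $g$ maps each such kernel into itself. Everything else is linear algebra over $\CC$ together with the elementary observation that passing from the centralizer to the normalizer of a finite cyclic group changes the group only by a finite factor, hence not the dimension. No serious obstacle arises; the statement is essentially a restatement of the block structure of matrices commuting with a diagonalizable operator.
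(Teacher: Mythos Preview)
Your argument is correct, and in fact the paper does not supply a proof at all: the lemma is stated as a ``simple lemma'' and left to the reader. Your approach---computing the centralizer $\operatorname{Z}_{\GL(V)}(\varphi)\cong\prod_{j}\GL(n_j,\CC)$ via the eigenspace decomposition and then observing that $\operatorname{N}_{\GL(V)}(\langle\varphi\rangle)/\operatorname{Z}_{\GL(V)}(\varphi)$ injects into $\aut(\ZZ_p)\cong\fp_p^*$, hence is finite---is exactly the standard and expected argument.

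One small imprecision worth cleaning up: early on you write that for arbitrary $g\in\GL(V)$ the conjugate $g\varphi g^{-1}$ ``is again a diagonal element of order $p$ whose signature is a permutation of $\sigma$.'' This is not literally true; $g\varphi g^{-1}$ need not be diagonal in the chosen basis, only diagonalizable with the same multiset of eigenvalues. Fortunately you do not actually use this claim, since the decisive step is the characterization $g\in\operatorname{N}_{\GL(V)}(\langle\varphi\rangle)\Leftrightarrow g\varphi g^{-1}=\varphi^a$ for some $a\in\fp_p^*$, which is correct and is all that is needed to pass to the centralizer-plus-finite-quotient argument. You might simply delete or rephrase that sentence. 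The degenerate case $\sigma=0$ (where $\varphi=\operatorname{Id}_V$ and $\langle\varphi\rangle$ is trivial) is not covered by your argument as written, but there the normalizer is all of $\GL(V)$ and the formula $\dim\GL(V)=(n+2)^2=n_0^2$ holds trivially.
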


Our method applied to cubic surfaces gives for every prime number $p$
the corresponding result contained in the lists of Segre
\cite{segre}, and Dolgachev and Verra \cite{dolga}. Theorems
\ref{tthre} and \ref{tfour} contains the results of the method
described in this section applied to threefolds and fourfolds,
respectively.

\begin{theorem} \label{tthre} %
  Let $X=V(F)$ be a smooth cubic threefold in $\PP^4$ that admits an
  automorphism $\varphi$ of order $p$ prime, then after a linear
  change of coordinates that diagonalizes $\varphi$, $F$ is given in
  the following list, a generator of $\langle \varphi \rangle$ is
  given by
  $$\diag(\sigma):\PP^4\rightarrow \PP^4,\qquad
  (x_0:\ldots:x_{4})\mapsto(\xi^{\sigma_0}x_0:\ldots:\xi^{\sigma_{4}}x_{4})\,,$$
  and $D$ is the dimension of the family of smooth cubic threefold
  that admits this automorphism.
  \begin{description}
  \item[$\TT_2^1$] $p=2,\ \sigma=(0,0,0,0,1),\ D=7,$
$$F=x_4^2L_1(x_0,x_1,x_2,x_3)+L_3(x_0,x_1,x_2,x_3)$$

\item[$\TT_2^2$] $p=2,\ \sigma=(0,0,0,1,1),\ D=6,$
$$F=x_0L_2(x_3,x_4)+x_1M_2(x_3,x_4)+x_2N_2(x_3,x_4)+L_3(x_0,x_1,x_2)$$

\item[$\TT_3^1$] $p=3,\ \sigma=(0,0,0,0,1),\ D=4,$
$$F=L_3(x_0,x_1,x_2,x_3)+x_4^3$$

\item[$\TT_3^2$] $p=3,\ \sigma=(0,0,0,1,1),\ D=1,$
$$F=L_3(x_0,x_1,x_2)+M_3(x_3,x_4)$$

\item[$\TT_3^3$] $p=3,\ \sigma=(0,0,0,1,2),\ D=4,$
$$F=L_3(x_0,x_1,x_2)+x_3x_4L_1(x_0,x_1,x_2)+x_3^3+x_4^3$$

\item[$\TT_3^4$] $p=3,\ \sigma=(0,0,1,1,2),\ D=2,$
$$F=L_3(x_0,x_1)+M_3(x_2,x_3)+x_4^3+L_1(x_0,x_1)M_1(x_2,x_3)x_4$$

\item[$\TT_5^1$] $p=5,\ \sigma=(0,1,2,3,4),\ D=2,$
$$F=a_1x_0^3+a_2x_0x_1x_4+a_3x_0x_2x_3+a_4x_1^2x_3+a_5x_1x_2^2+a_6x_2x_4^2+a_7x_3^2x_4$$

\item[$\TT_{11}^1$] $p=11,\ \sigma=(1,3,4,5,9),\ D=0,$
$$F=x_0^2x_4+x_3^2x_0+x_1^2x_3+x_1x_2^2+x_2x_4^2$$
\end{description}
Here the $L_i,M_i$, and $N_i$ are forms of degree $i$.
\end{theorem}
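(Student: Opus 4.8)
The plan is to apply the Computational method described above with $n=3$ and to run through each admissible prime $p\in\{2,3,5,11\}$ for dimension $3$, producing in each case the list of equivalence classes of signatures $\sigma$ (under the relation $\sim$) for which $\EE_\sigma$ contains a smooth cubic threefold, together with the dimension count $D=\dim\EE_\sigma-\dim\operatorname{N}_{\GL(V)}(\langle\varphi\rangle)$. The first task is to reduce the set of candidate signatures in $\fp_p^5$. For $p\neq 3$ one uses Remark~\ref{pn3} and Lemma~\ref{base}: every variable $x_i$ with $\sigma_i\neq 0$ must occur in a monomial $x_i^2x_j$ of $F\in\EE_\sigma$, so $2\sigma_i+\sigma_j\equiv 0$ is forced for some $j$; combined with the normalization of Remark~\ref{signa} (for $p>3$, a nonzero component may be taken to be $1$, and then $-2,4,\ldots$ are forced to appear), this drastically limits the possibilities. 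For each surviving $\sigma$ one writes down $\EE_\sigma$ explicitly by listing the monomials $x_ix_jx_k$ with $\sigma_i+\sigma_j+\sigma_k\equiv 0\bmod p$, then checks whether a generic (or, when the family is $0$-dimensional, a specific) member is smooth via the Jacobian criterion, discarding by Lemma~\ref{base} any $\sigma$ whose eigenspace is missing some $x_i^2$.

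Concretely: for $p=2$, the signatures up to $\sim$ are parametrized by the number $m$ of $1$'s among $\sigma_0,\ldots,\sigma_4$ (equivalently the dimension of the $-1$-eigenspace of $\varphi$), $m\in\{1,2,3,4,5\}$; one checks that $m=1$ and $m=2$ yield smooth threefolds (cases $\TT_2^1$, $\TT_2^2$), while $m\geq 3$ forces too few monomials and hence, by Lemma~\ref{base}, singularity — note $m$ and $5-m$ give conjugate groups since negating the signature and adding $\mathbbm 1$ swaps them, so $m=3,4$ reduce to $m=2,1$ but the eigenspace structure must be re-examined; in fact the honest statement is that $m=1,2$ exhaust the cases with a smooth member. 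For $p=3$ one must also allow $F$ in the $\xi$-eigenspace, as flagged after the method; the relevant signatures (up to $\sim$, so up to scaling by $\fp_3^*=\{1,2\}$ and adding $\mathbbm 1$ and permutation) are $(0,0,0,0,1)$, $(0,0,0,1,1)$, $(0,0,0,1,2)$, $(0,0,1,1,2)$, and one verifies each admits a smooth member with the displayed normal form, while $(0,0,1,1,1)$, $(0,1,1,1,1)$ etc. fail. For $p=5$, by Remark~\ref{signa} a nonzero component is $1$ and then $-2\equiv 3,\ 4\equiv 4,\ (-2)^3\equiv 2$ all appear, forcing $\{1,2,3,4\}\subseteq\sigma$, so up to permutation and adding $\mathbbm 1$ and scaling, $\sigma=(0,1,2,3,4)$; the monomials $x_ix_jx_k$ with exponent-sum $\equiv 0\bmod 5$ are exactly the seven listed, and a generic such $F$ is smooth, giving $\TT_5^1$ with $D=7-5=2$. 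For $p=11$, since $11>2^3$, Theorem~\ref{kuni} applies: $X$ must be the Klein threefold, whose signature $(1,-2,4,-8,16)\equiv(1,9,4,3,5)\bmod 11$ is, up to permutation, $(1,3,4,5,9)$, and $D=7-7=0$; one reads off $F=x_0^2x_1+x_1^2x_2+x_2^2x_3+x_3^2x_4+x_4^2x_0$ and rewrites it in the permuted coordinates to match $\TT_{11}^1$.

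In each case the dimension $D$ is computed from the preceding lemma, $\dim\operatorname{N}_{\GL(V)}(\langle\varphi\rangle)=n_0^2+\cdots+n_{p-1}^2$ where $n_j$ counts the occurrences of $j$ in $\sigma$: e.g. for $\TT_2^1$, $\sigma=(0,0,0,0,1)$ gives $n_0=4,n_1=1$, normalizer dimension $17$, and $\dim\EE_\sigma=\dim(x_4^2\cdot\langle x_0,x_1,x_2,x_3\rangle\oplus S^3(x_0,x_1,x_2,x_3))=4+20=24$, so $D=24-17=7$; the remaining $D$'s follow by the same bookkeeping. Once all signatures are enumerated one applies the equivalence relation $\sim$ to keep one representative per class — this is where one must be careful that, e.g., for $p=2$ the classes $m$ and $5-m$ are identified, and for $p=5,11$ the scaling action $a\in\fp_p^*$ on $\sigma$ does not collapse distinct-looking signatures.

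The main obstacle is the smoothness verification: for the positive cases one must confirm that a \emph{generic} member of $\EE_\sigma$ (or the single explicit form, when $D=0$) defines a smooth hypersurface — this is a Jacobian-ideal computation that is routine for the small eigenspaces here but must genuinely be carried out — and, more delicately, for the \emph{negative} cases one must rule out \emph{every} $F\in\EE_\sigma$ being smooth. The latter is handled uniformly by Lemma~\ref{base}: whenever the eigenspace $\EE_\sigma$ fails to contain, for some index $i$, any monomial in which $x_i$ appears with degree $\geq 2$, every $F\in\EE_\sigma$ is singular; a short case analysis shows this is exactly what happens for all signatures not on the list. The only residual care is the handful of signatures where Lemma~\ref{base} does not immediately apply but smoothness still fails for subtler reasons — as noted after the method statement, these few are eliminated by a direct (by-hand) singularity check.
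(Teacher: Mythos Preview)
Your proposal is correct and follows exactly the paper's approach: the paper does not give a separate proof of this theorem, but simply states that it is the output of the Computational method applied with $n=3$, and your write-up spells out precisely that computation (enumeration of signatures in $\fp_p^5$ modulo $\sim$, elimination via Lemma~\ref{base} and Remark~\ref{signa}, the special treatment of the $\xi$-eigenspace when $p=3$, the invocation of Theorem~\ref{kuni} for $p=11$, and the normalizer dimension count). One minor slip: for $\TT_{11}^1$ you wrote $D=7-7=0$, but in fact $\dim\EE_\sigma=5$ (the five Klein monomials) and the normalizer has dimension $1^2+\cdots+1^2=5$, so $D=5-5=0$.
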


\begin{proposition}
  The Fermat cubic threefold belongs to all the above families except
  ${\TT}_{11}^1$. The Klein cubic threefold belongs to the families
  ${\TT}_2^2$,${\TT}_3^4$, ${\TT}_5^1$ and ${\TT}_{11}^1$.
\end{proposition}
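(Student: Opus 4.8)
The plan is, for each asserted membership, to exhibit an explicit automorphism of the relevant threefold and to read off its signature after diagonalisation; by Theorem~\ref{tthre} the $\sim$-class of this signature pins down which family $\TT_p^k$ the threefold lies in. The non-membership $X_{\mathrm{Fermat}}\notin\TT_{11}^1$ and the exhaustiveness of the list for the Klein threefold will then come from the strong constraints on automorphism groups of smooth cubic threefolds.

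For $X_{\mathrm{Fermat}}=V(x_0^3+\cdots+x_4^3)$: a transposition of two coordinates is an order-$2$ automorphism with eigenvalues $1,1,1,1,-1$, hence signature $(0,0,0,0,1)$, so $X_{\mathrm{Fermat}}\in\TT_2^1$; a product of two disjoint transpositions has eigenvalues $1,1,1,-1,-1$, signature $(0,0,0,1,1)$, so $X_{\mathrm{Fermat}}\in\TT_2^2$. Fixing a primitive cube root of unity $\omega$, the diagonal automorphisms $\diag(1,1,1,1,\omega)$, $\diag(1,1,1,\omega,\omega)$, $\diag(1,1,1,\omega,\omega^2)$, $\diag(1,1,\omega,\omega,\omega^2)$ fix the Fermat form and have signatures $(0,0,0,0,1)$, $(0,0,0,1,1)$, $(0,0,0,1,2)$, $(0,0,1,1,2)$, realising $\TT_3^1,\TT_3^2,\TT_3^3,\TT_3^4$. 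The cyclic coordinate shift $x_i\mapsto x_{i+1}$ (indices mod $5$) is an order-$5$ automorphism with the five fifth roots of unity as eigenvalues, signature $(0,1,2,3,4)$, giving $\TT_5^1$. Hence the Fermat threefold lies in all seven families with $p\in\{2,3,5\}$. It is not in $\TT_{11}^1$, a single isomorphism class (the Klein threefold), since it has no automorphism of order $11$: indeed $\aut(X_{\mathrm{Fermat}})\cong(\ZZ/3)^4\rtimes S_5$ has order $3^5\cdot 2^3\cdot 5$, prime to $11$ (equivalently, by Theorem~\ref{kuni} an order-$11$ automorphism would force $X_{\mathrm{Fermat}}$ to be the Klein threefold, whose automorphism group has different order).

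For the Klein threefold $X=V(F)$, $F=x_0^2x_1+x_1^2x_2+x_2^2x_3+x_3^2x_4+x_4^2x_0$: membership in $\TT_{11}^1$ is immediate, as the cubic listed there is $F$ up to a permutation of the coordinates, and membership in $\TT_5^1$ follows because the cyclic shift $x_i\mapsto x_{i+1}$ fixes $F$, has order $5$, and has signature $(0,1,2,3,4)$. For the other two families I would use Adler's theorem $\aut(X)=\PSL(2,\fp_{11})$: this group has order $660=2^2\cdot 3\cdot 5\cdot 11$, so $X$ admits automorphisms of orders $2$ and $3$, and since all involutions of $\PSL(2,\fp_{11})$ are conjugate and so are all its elements of order $3$, the threefold $X$ lies in exactly one of $\TT_2^1,\TT_2^2$ and exactly one of $\TT_3^1,\dots,\TT_3^4$. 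To decide which, note that $V\cong\CC^5$ affords (a lift of) one of the two $5$-dimensional irreducible representations of $\PSL(2,\fp_{11})$, whose character equals $1$ on an involution and $-1$ on an element of order $3$. Thus an order-$2$ automorphism has $k$ eigenvalues $+1$ with $k-(5-k)=1$, so $k=3$, signature $(0,0,0,1,1)$, placing $X$ in $\TT_2^2$; and an order-$3$ automorphism with eigenvalue multiplicities $(n_0,n_1,n_2)$ for $(1,\omega,\omega^2)$ satisfies $n_0+n_1\omega+n_2\omega^2=-1$, which being real forces $n_1=n_2$ and hence $(n_0,n_1,n_2)=(1,2,2)$, signature $(0,0,1,1,2)$, placing $X$ in $\TT_3^4$. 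Since by Theorem~\ref{tthre} a smooth cubic threefold has no automorphism of prime order outside $\{2,3,5,11\}$, this shows $X$ lies in exactly $\TT_2^2,\TT_3^4,\TT_5^1,\TT_{11}^1$.

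The step I expect to be the real obstacle is the last one, identifying which order-$2$ and order-$3$ families the Klein threefold falls in. The conjugacy reduction makes this a finite check, but carrying it out needs the character values $1$ and $-1$ of the $5$-dimensional representation of $\PSL(2,\fp_{11})$ on its involutions and order-$3$ elements. These follow from the character table of $\PSL(2,\fp_{11})$ (conveniently, the $5$-dimensional representation restricts to an $A_5$ subgroup as the standard $5$-dimensional irreducible of $A_5$, whose values $1$ and $-1$ are classical), or alternatively one writes explicit matrices of orders $2$ and $3$ which, together with the order-$5$ shift and the order-$11$ automorphism, generate the copy of $\PSL(2,\fp_{11})$ in $\PGL_5(\CC)$ preserving $F$, and reads the eigenvalues off directly --- which is essentially what the computational method of Section~\ref{class} produces.
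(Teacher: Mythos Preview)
Your argument is correct and structurally identical to the paper's: both invoke the known automorphism groups of the Fermat and Klein threefolds, exhibit or assert the existence of elements with each required spectrum, and use the single conjugacy class of involutions and of order-$3$ elements in $\PSL(2,\fp_{11})$ to reduce the Klein order-$2$ and order-$3$ cases to a single determination. The one point of divergence is how that last determination is made. The paper simply writes ``an easy computation shows that the Klein threefold belongs to $\TT_2^2$ and $\TT_3^4$'', i.e.\ one produces explicit matrices and reads off eigenvalue multiplicities. You instead argue via the character of the $5$-dimensional irreducible representation of $\PSL(2,\fp_{11})$ on $V$, obtaining the values $1$ and $-1$ on the order-$2$ and order-$3$ classes and hence the multiplicities $(3,2)$ and $(1,2,2)$. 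This is a nice conceptual shortcut, but as you yourself flag, it rests on knowing those two character values; your proposed justification via restriction to an $A_5$-subgroup requires that the restriction be the $5$-dimensional irreducible of $A_5$ rather than, say, $4\oplus 1$, which is true but needs its own argument. Your fallback of writing explicit matrices is exactly the paper's route, so in the end the two proofs coincide.
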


\begin{proof}
  The automorphisms group $\aut(X)$ of the Fermat threefold $X$ is
  isomorphic to an $S_5$-extension of ${\mathbb Z}_3^4$, where $S_5$
  acts in $\PP^4$ by permutation of the coordinates and ${\mathbb Z}_3^4$
  acts in $\PP^4$ by multiplication in each coordinate by a cubic root
  of the unit \cite{kontogeorgis}. 

  It is easy to produce matrices in $\aut(X)$ with the same spectrum
  of the automorphism of all the above families except
  ${\TT}_{11}^1$. Furthermore, since this group does not contain any
  element of order $11$, it follows that $X$ does not belongs to the
  family ${\TT}_{11}^1$.

  The automorphisms group $\aut(X)$ of the Klein threefold $X$ is
  isomorphic to $\PSL(2,\fp_{11})$ \cite{adler}.
  %
  The conjugacy classes of element of prime order in
  $\PSL(2,\fp_{11})$ are as follows: two conjugacy classes of elements
  of order $11$; two conjugacy classes of elements of order $5$; one
  conjugacy class of elements of order $3$; and one conjugacy class of
  elements of order $2$.

  It follows immediately that $X$ belongs to the families ${\TT}_5^1$
  and ${\TT}_{11}^1$. Since there is only one conjugacy class of
  orders 2 and 3, respectively, then $X$ belongs to one and only one
  of the families with automorphisms of order 2 and 3,
  respectively. An easy computation shows that the Klein
  threefold belongs to ${\TT}_2^2$ and ${\TT}_3^4$.
\end{proof}

\begin{remark}
  The above proposition shows, in particular, that a generic element
  in all the families in Theorem~\ref{tthre} is smooth.
\end{remark}

\begin{theorem} \label{tfour} Let $X=V(F)$ be a smooth cubic fourfold
  in $\PP^5$ that admits an automorphism $\varphi$ of order $p$ prime,
  then after a linear change of coordinates that diagonalizes
  $\varphi$, $F$ is given in the following list, a generator of
  $\langle \varphi \rangle$ is given by
$$\diag(\sigma):\PP^5\rightarrow \PP^5,\qquad (x_0:\ldots:x_{5})\mapsto(\xi^{\sigma_0}x_0:\ldots:\xi^{\sigma_{5}}x_{5})\,,$$
and $D$ is the dimension of the family of smooth cubic fourfolds that
admits this automorphism.

\begin{description}
\item[$\FF_2^1$] $p=2,\ \sigma=(0,0,0,0,0,1),\ D=14,$
$$F=L_3(x_0,x_1,x_2,x_3,x_4)+x_5^2L_1(x_0,\cdots,x_4)\,.$$

\item[$\FF_2^2$] $p=2,\ \sigma=(0,0,0,0,1,1),\ D=12,$
$$F=L_3(x_0,x_1,x_2,x_3)+x_4^2L_1(x_0,x_1,x_2,x_3)+x_4x_5M_1(x_0,x_1,x_2,x_3)+x_5^2N_1(x_0,x_1,x_2,x_3)\,.$$

\item[$\FF_2^3$] $p=2,\ \sigma=(0,0,0,1,1,1),\ D=10,$
$$F=L_3(x_0,x_1,x_2)+x_0L_2(x_3,x_4,x_5)+x_1M_2(x_3,x_4,x_5)+x_2N_2(x_3,x_4,x_5)\,.$$

\item[$\FF_3^1$] $p=3,\ \sigma=(0,0,0,0,0,1),\ D=10,$
$$F=L_3(x_0,x_1,x_2,x_3,x_4)+x_5^3\,.$$

\item[$\FF_3^2$] $p=3,\ \sigma=(0,0,0,0,1,1),\ D=4,$
$$F=L_3(x_0,x_1,x_2,x_3)+M_3(x_4,x_5)\,.$$

\item[$\FF_3^3$] $p=3,\ \sigma=(0,0,0,0,1,2),\ D=8,$
$$F=L_3(x_0,x_1,x_2,x_3)+x_4^3+x_5^3+x_4x_5L_1(x_0,x_1,x_2,x_3)\,.$$

\item[$\FF_3^4$] $p=3,\ \sigma=(0,0,0,1,1,1),\ D=2,$
$$F=L_3(x_0,x_1,x_2)+M_3(x_3,x_4,x_5)\,.$$

\item[$\FF_3^5$] $p=3,\ \sigma=(0,0,0,1,1,2),\ D=7,$
$$F=L_3(x_0,x_1,x_2)+M_3(x_3,x_4)+x_5^3+x_3x_5L_1(x_0,x_1,x_2)+x_4x_5M_1(x_0,x_1,x_2)\,.$$

\item[$\FF_3^6$] $p=3,\ \sigma=(0,0,1,1,2,2),\ D=8,$
$$F=L_3(x_0,x_1)+M_3(x_2,x_3)+N_3(x_4,x_5)+\sum_{i=1,2;\,j=3,4;\,k=5,6} a_{ijk}x_ix_jx_k\,.$$

\item[$\FF_3^7$] $p=3,\ \sigma=(0,0,1,1,2,2),\ D=6,$
$$F=x_2L_2(x_0,x_1)+x_3M_2(x_0,x_1)+x_4^2L_1(x_0,x_1)+$$$$x_4x_5M_1(x_0,x_1)+x_5^2N_1(x_0,x_1)+x_4N_2(x_2,x_3)+x_5O_2(x_2,x_3)\,.$$

\item[$\FF_5^1$] $p=5,\ \sigma=(0,0,1,2,3,4),\ D=4,$
$$F=L_3(x_0,x_1)+x_2x_5L_1(x_0,x_1)+x_3x_4M_1(x_0,x_1)+x_2^2x_4+x_2x_3^2+x_3x_5^2+x_4^2x_5\,.$$

\item[$\FF_5^2$] $p=5,\ \sigma=(1,1,2,2,3,4),\ D=2,$
$$F=x_0L_2(x_2,x_3)+x_1M_2(x_2,x_3)+x_4N_2(x_0,x_1)+x_5^2M_1(x_2,x_3)+x_4^2x_5\,.$$

\item[$\FF_7^1$] $p=7,\ \sigma=(1,2,3,4,5,6),\ D=2,$
$$F=x_0^2x_4+x_1^2x_2+x_0x_2^2+x_3^2x_5+x_3x_4^2+x_1x_5^2+ax_0x_1x_3+bx_2x_4x_5\,.$$

\item[$\FF_{11}^1$] $p=11,\ \sigma=(0,1,3,4,5,9),\ D=0,$
$$F=x_0^3+x_1^2x_5+x_2^2x_4+x_2x_3^2+x_1x_4^2+x_3x_5^2\,.$$
\end{description}
Here $L_i$, $M_i$, $N_i$ and $O_i$ are forms of degree $i$.
\end{theorem}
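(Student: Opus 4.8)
The plan is to run the Computational method of Section~\ref{class} for $n=4$, in complete analogy with the proof of Theorem~\ref{tthre}; what follows is the organization of the argument, which is a lengthy but elementary finite case analysis.

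By Theorem~\ref{resultado} the prime $p$ must be admissible in dimension $4$, hence $p\in\{2,3,5,7,11\}$ (cf.\ the table after Remark~\ref{rk:klein-new}). The first task is, for each such $p$, to produce a set of representatives of $\fp_p^{6}/\!\sim$. For $p=2$ a class is determined by the number of nonzero entries of $\sigma$, which up to permutation and the translation $\sigma\mapsto\sigma+\mathbbm 1$ may be normalized to $0,1,2,3$; discarding $\overline 0$ leaves three classes. For $p=3$ the operations defining $\sim$ act on the multiset of values of $\sigma$ through the full affine group $\mathrm{AGL}(1,\fp_3)\cong S_3$, so a class corresponds to a partition of $6$ into at most three parts; discarding the partition $(6)$ leaves six classes. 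For $p\in\{5,7,11\}$ I would use Lemma~\ref{base}: if $X=V(F)$ is smooth then every variable occurs in $F$ with degree at least $2$, so $F$ contains a monomial $x_i^2x_j$, which for $p\neq 2,3$ forces $-2\sigma_i$ to appear among the entries of $\sigma$ whenever $\sigma_i\neq 0$. Thus the set of nonzero values of $\sigma$ is stable under $v\mapsto -2v$; since $-2$ has order $4,6,5$ modulo $5,7,11$ respectively and $\sigma$ has only six entries, this leaves a very short explicit list of candidate classes in each case.

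For each candidate class $\sigma$ I would then write $\EE_\sigma$ explicitly as the span of the monomials $x_ix_jx_k$ with $\sigma_i+\sigma_j+\sigma_k\equiv 0\bmod p$, and when $p=3$ also the eigenspace of $\varphi$ on $S^3(V)$ for the eigenvalue $\xi$, a primitive cube root of unity, since Remark~\ref{pn3} fails in that case. One then tests whether the generic member of that space is smooth. A candidate is discarded by one of three recurring arguments: either $\EE_\sigma$ violates the conclusion of Lemma~\ref{base} for some variable; or every element of $\EE_\sigma$ has a common linear factor, or more generally a common base point of its partial derivatives, so that $V(F)$ is always singular (this is the typical situation for the $\xi$-eigenspaces when $p=3$, for the $p=11$ classes with a repeated nonzero value, and for the $p=5$ shape $6=3+1+1+1$); or, when neither happens, a Jacobian computation on the generic member confirms smoothness. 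The surviving classes are precisely the fourteen families of the statement; in particular, for $p=3$ the single signature $(0,0,1,1,2,2)$ survives twice, yielding $\FF_3^6$ from the eigenvalue $1$ and $\FF_3^7$ from the eigenvalue $\xi$. For each surviving class the generic element of $\EE_\sigma$ is written in terms of forms $L_i,M_i,N_i,O_i$ of the indicated degrees, the generator $\diag(\sigma)$ of $\langle\varphi\rangle$ is read off as in Remark~\ref{signa}, and
$$D=\dim\EE_\sigma-\dim\operatorname{N}_{\GL(V)}(\langle\varphi\rangle)=\dim\EE_\sigma-\sum_{j\in\fp_p}n_j^{2}$$
by the Lemma preceding Theorem~\ref{tthre}, where $n_j$ is the number of times $j$ occurs in $\sigma$; counting the monomials in $\EE_\sigma$ gives the stated values of $D$.

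The main obstacle is not any single step but the volume of the case analysis together with the need for uniformity, above all for $p=2$ and $p=3$: one must enumerate $\fp_p^{6}/\!\sim$ with no omissions, remember to scan the $\xi$-eigenspace when $p=3$, and for each non-smooth class exhibit a clean reason — a common factor or a common base point of the partials — why no member is smooth, since an overlooked singular class would spuriously enlarge the list while an overlooked smooth class would shrink it.
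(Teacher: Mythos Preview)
Your proposal is correct and follows essentially the same approach as the paper: the paper presents Theorem~\ref{tfour} simply as the output of the Computational Method of Section~\ref{class} (cf.\ the sentence preceding Theorem~\ref{tthre}), with smoothness of a generic member in each family verified afterwards via the Fermat and Klein fourfolds and a few Maple checks. Your organizational refinements---counting $\fp_p^{6}/\!\sim$ via partitions for $p=2,3$, pruning candidates for $p\ge 5$ via closure of the nonzero values under $v\mapsto -2v$, and recognizing that the extra $p=3$ family $\FF_3^7$ arises from the $\xi$-eigenspace of the signature $(0,0,1,1,2,2)$---are all sound elaborations of that method rather than a different strategy.
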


\begin{proposition}
  The Fermat cubic fourfold belongs to all the above families except
  ${\FF}_{3}^7$, ${\FF}_{5}^2$, ${\FF}_{7}^1$, and ${\FF}_{11}^1$. 
\end{proposition}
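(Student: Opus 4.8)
The plan is to read everything off the automorphism group of the Fermat cubic fourfold $X=V(F_0)$, $F_0=x_0^3+\dots+x_5^3$. As in the threefold case treated above, $\aut(X)$ is an $S_6$-extension of $\ZZ_3^5$, with $S_6$ acting on $\PP^5$ by permuting coordinates and $\ZZ_3^5$ by multiplying coordinates by cube roots of unity (see \cite{kontogeorgis} or the classical description of automorphisms of Fermat hypersurfaces); in particular every automorphism of $X$ is monomial, it fixes $F_0$, and $|\aut(X)|=3^5\cdot 6!=2^4\cdot 3^7\cdot 5$. By the Computational method preceding Theorem~\ref{tthre}, a smooth cubic fourfold lies in a family $\FF_p^i$ exactly when it carries an automorphism of order $p$ whose signature is $\sim$-equivalent to the signature $\sigma$ attached to that family --- together with, for $p=3$, the datum of which eigenspace of $\diag(\sigma)$ on $S^3(V)$ contains the defining form. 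So the proposition amounts to a list of assertions of the form ``$X$ does / does not carry a monomial automorphism of order $p$ with a prescribed spectrum''.

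For the inclusions I would simply produce the automorphisms. For $p=2$ a product of $k$ disjoint transpositions in $S_6$ diagonalizes with signature $(0,\dots,0,1,\dots,1)$ ($k$ ones), giving $\FF_2^1,\FF_2^2,\FF_2^3$ for $k=1,2,3$. For $p=5$ a $5$-cycle in $S_6$ fixes one coordinate and diagonalizes to $(0,0,1,2,3,4)$, the signature of $\FF_5^1$. For $p=3$ one uses the diagonal automorphisms $x_i\mapsto\xi x_i$ and their products, together with $3$-cycles, products of two disjoint $3$-cycles, and such permutations twisted by diagonal automorphisms; a short case check shows that the signatures $(0^5,1),(0^4,1^2),(0^4,1,2),(0^3,1^3),(0^3,1^2,2),(0^2,1^2,2^2)$ of $\FF_3^1,\dots,\FF_3^6$ all arise, each with $F_0$ falling into the eigenspace prescribed by that family after the relevant coordinate change. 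All of these are routine spectrum computations.

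For the exclusions, $\FF_7^1$ and $\FF_{11}^1$ are immediate: $7$ and $11$ do not divide $|\aut(X)|=2^4\cdot 3^7\cdot 5$, so $X$ has no automorphism of these orders. The case $\FF_5^2$ is a spectral argument. Any order-$5$ element of $\aut(X)$ projects to a $5$-cycle of $S_6$ (were its $S_6$-part trivial, its $\ZZ_3^5$-part would be forced to vanish); on the five coordinates it moves it is, up to a scalar, a cyclic operator, so its characteristic polynomial there is $t^5-c$, which after normalization to order $5$ becomes $t^5-1$. Hence its eigenvalues on $V$ are the five distinct fifth roots of unity plus one further fifth root of unity on the fixed coordinate, so its signature has ``partition type'' $(2,1,1,1,1)$, whereas $(1,1,2,2,3,4)$, the signature of $\FF_5^2$, has type $(2,2,1,1)$. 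Since $\sim$ preserves the multiset of multiplicities of the entries, $X$ is not in $\FF_5^2$.

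The last exclusion, $\FF_3^7$, is the delicate one, and is where I expect the main difficulty. Its signature $(0,0,1,1,2,2)$ coincides with that of $\FF_3^6$, so neither a divisibility bound nor a partition-type argument can separate them: the two families differ only by the character --- $\xi$ or $1$ --- through which the generator $\diag(0,0,1,1,2,2)$ acts on the defining cubic. Excluding $\FF_3^7$ therefore requires an inventory of the order-$3$ elements $(d,s)\in\ZZ_3^5\rtimes S_6$ whose diagonalization has signature $(0,0,1,1,2,2)$ --- essentially the twisted products of two disjoint $3$-cycles --- recording in each case the scalar by which it multiplies $F_0$, and checking that, up to the coordinate changes allowed in $\sim$, none of them lands $F_0$ in the $\xi$-eigenspace of $\diag(0,0,1,1,2,2)$ rather than in the $1$-eigenspace. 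This bookkeeping is the technical core of the proposition and the step I would work through most carefully.
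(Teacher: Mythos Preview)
Your argument mirrors the paper's closely: the same description of $\aut(X)\cong\ZZ_3^5\rtimes S_6$, the same production of explicit automorphisms for the inclusions, the same order-divisibility argument for $\FF_7^1$ and $\FF_{11}^1$, and essentially the same conjugacy-class argument for $\FF_5^2$ (the paper just says ``there is only one conjugacy class of elements of order $5$ in $S_6$ and its spectrum corresponds to the family $\FF_5^1$'').

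The one real divergence is $\FF_3^7$. The paper dispatches it in a single sentence: every automorphism of order $3$ of the Fermat leaves $F_0$ invariant, whereas $\FF_3^7$ is by construction the family in which the generator acts on the defining form by $\xi$ rather than by $1$; hence the Fermat cannot lie there. You actually record this invariance yourself in your opening paragraph (``it fixes $F_0$'') but then set it aside and propose a full inventory of the order-$3$ monomial elements with signature $(0,0,1,1,2,2)$, tracking in each case the scalar by which the cubic is multiplied. So the paper's route is much shorter. What your longer route buys is that it forces attention onto the one genuinely delicate point you correctly sensed: the natural monomial lift of an order-$3$ element of $\ZZ_3^5\rtimes S_6$ need not itself have order $3$ in $\GL(V)$ (a product of two disjoint $3$-cycles twisted by a diagonal whose entries on each cycle sum to a nonzero class in $\fp_3$ has order $9$ as a matrix), and it is the eigenvalue of the order-$3$ lift on $F_0$, not of the monomial lift, that decides between $\FF_3^6$ and $\FF_3^7$. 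The paper's one-liner glides over this distinction; your inventory is exactly what would resolve it.
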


\begin{proof}
  The automorphisms group $\aut(X)$ of the Fermat fourfold $X$ given
  by $F=x_0^3+x_1^3+x_2^3+x_3^3+x_4^3+x_5^3$ is isomorphic to an
  $S_6$-extension of ${\mathbb Z}_3^5$, where $S_6$ acts in $\PP^5$ by
  permutation of the coordinates and ${\mathbb Z}_3^5$ acts in $\PP^5$
  by multiplication in each coordinate by a cubic root of the unit
  \cite{kontogeorgis}.

  It is easy to produce matrices in $\aut(X)$ with the same spectrum
  of the automorphism of all the above families except ${\FF}_{3}^7$,
  ${\FF}_{5}^2$, ${\FF}_{7}^1$, and ${\FF}_{11}^1$.  Furthermore, $X$
  does not admit automorphisms of order 7 or 11, thus $X$ does not
  belong to ${\FF}_{7}^1$ or ${\FF}_{11}^1$.

  For the remaining two cases we have: $X$ does not belong to the
  family ${\FF}_{5}^2$ since there is only one conjugacy class of
  elements of order 5 in $S_6$ and it spectrum corresponds to the
  family ${\FF}_{5}^1$; and $X$ does not belong to ${\FF}_{3}^7$ since
  every automorphism of order 3 of $X$ leaves $F$ invariant and this
  is not the case if the family ${\FF}_{3}^7$.
\end{proof}

\begin{remark}
  To obtain a smooth fourfold in the remaining 4 families we proceed
  as follows: by Remark~\ref{rk:klein-new} the Klein fourfold belongs
  to ${\FF}_{7}^1$; the family ${\FF}_{11}^1$ is reduced one fourfold,
  a triple covering of the Klein threefold, which is smooth; and for
  the remaining two families we can check with a software such as
  Maple that the fourfold obtained by putting all the parameters equal
  to one is smooth.
\end{remark}

\section{An application to abelian varieties}
\label{abvar}

Let $X$ be a smooth compact Kahler manifold of dimension $n$ and let
$q\leq n$ be a positive integer. We denote by $\JJ_q(X)$ its
$q$-intermediate Griffiths jacobian. $\JJ_q(X)$ is a complex torus. If
$n=2q-1$ is odd $\JJ_q(X):=\JJ(X)$ is called the intermediate jacobian
of $X$. If $X$ is a smooth hypersurface of degree $d$ of ${\PP}^{n+1}$
it s known \cite{deligne} that $\JJ(X)$ is a non-trivial principally
polarized abelian variety (p.p.a.v.) if and only if $n=1$ and $d\geq
3$, $n=3$ and $d=3,4$, or $n=5$ and $d=3$.

Let $\AAA_g$ be the moduli space of p.p.a.v.'s of dimension $g$. It
follows that $\JJ(X_3) \in \AAA_5$ and $\JJ(X_5) \in \AAA_{21}$, where
$X_3$ and $X_5$ are the Klein cubic threefold and $5$-fold,
respectively (see Definition~\ref{klein}).

The Klein $3$-fold $X_3$ admits an automorphism of prime order $p=11$,
see Theorem~\ref{tthre}. By Theorems~\ref{resultado} and \ref{kuni},
the Klein $5$-fold $X_5$ is the only $5$-fold admitting an
automorphism of prime order $p=43$ given by
$$\varphi(x_0,\ldots,x_6)= (\xi x_0,\xi^{41}x_1,\xi^{4}x_2,
\xi^{35}x_3,\xi^{16}x_4,\xi^{11}x_5,\xi^{21}x_6)\,.$$

It follows from \cite{victor} that $\JJ(X_3)$ and $\JJ(X_5)$ are
p.p.a.v. of complex multiplication type and that they are
zero-dimensional components of the singular locus of $\AAA_5$ and
$\AAA_{21}$, respectively.

It was proved in \cite{victor} that $\JJ(X_3)$ is a non-isolated
singular point of $\AAA_5$.  The automorphism $\varphi$ of $X_5$
induces an automorphism $\widetilde{\varphi}$ of the tangent space
$T_0\JJ(X_5)$. A routine computation shows that the spectrum of
$\widetilde{\varphi}$ is
$$\left\{\xi^{2},\xi^{3},\xi^{5},\xi^{8},\xi^{9},\xi^{12},\xi^{13},
  \xi^{14},\xi^{15},\xi^{17},
  \xi^{19},\xi^{20},\xi^{22},\xi^{25},\xi^{27},\xi^{32},\xi^{33},
  \xi^{36},\xi^{37},\xi^{39},\xi^{42}\right\}\,.$$

The spectrum of $\widetilde{\varphi}$ is stabilized by the map
$$\psi:\fp_{43}^*\rightarrow\fp_{43}^*,\qquad a\mapsto \psi(a)=a^{11}\,.$$
It follows by \cite[Theorem 1]{victor} that $\JJ(X_5)$ is also a
non-isolated singular point of $\AAA_{21}$.

\end{document}